\newtheorem*{theorem}{Theorem}
\newtheorem{lemma}{Lemma} 
\DeclareMathOperator{\val}{val} \DeclareMathOperator{\Supp}{Supp} \DeclareMathOperator{\divisor}{div}
\newtheorem*{subject}{2000 Mathematics Subject Classification}
\newtheorem*{keywords}{Keywords}
\author{Marc Coppens\footnote{Thomas Moore Campus Geel, Departement IBW,
Kleinhoefstraat 4, B-2440 Geel, Belgium; K.U.Leuven, Departement of Mathematics,
Celestijnenlaan 200B, B-3001 Leuven, Belgium; email: marc.coppens@khk.be.
Partially supported by the FWO-grant 1.5.012.13N}}
\title{Clifford's Theorem for graphs}
\date{}
\begin{document}
\maketitle \noindent

\begin{abstract}
Let $\Gamma$  be a metric graph of genus $g$. Assume there exists a natural number $2 \leq r \leq g-2$ such that $\Gamma$ has a linear system $g^r_{2r}$. Then $\Gamma$ has a linear system $g^1_2$. For algebraic curves this is part of the well-known Clifford's Theorem.
\end{abstract}

\begin{subject}
14T05; 05C99
\end{subject}

\begin{keywords}
graph, divisor, linear system, Clifford's Theorem
\end{keywords}

\section{Introduction}\label{section1}

A well-known and important theorem in the theory of linear systems on a smooth projective irreducible algebraic curve $C$ of genus at least 2 is Clifford's Theorem.
Let $g^r_d$ be a linear system on $C$ then $d\geq 2r$. Moreover in case $d=2r$ then either $g^r_d$ is trivial or it is the canonical linear system or the curve is hyperelliptic and $g^r_{2r}=rg^1_2$.
For a proof of Clifford's Theorem see e.g. \cite{ref1}*{Chapter III}.
The inequality $d\geq 2r$ is an easy corollary of the Riemann-Roch Theorem and also the characterisation of special linear systems $g^r_{2r}$ on a hyperelliptic curve can be obtained directly from the Riemann-Roch Theorem.

In \cite{ref2} one proves a Riemann-Roch Theorem for finite graphs and in \cite{ref3}, \cite{ref4} one proves a Riemann-Roch Theorem for metric graphs and for tropical curves. In particular the inequality $d\geq 2r$ for a special $g^r_d$ on such structure follows immediately.
In \cite{ref5} one gives a description of the special linear systems $g^r_{2r}$ on such structure in case there exist a $g^1_2$.
Now we are going to prove that, in case such structure has a $g^r_{2r}$ for some $2\leq r\leq g-2$, then there exists a $g^1_2$, thereby obtaining  a complete Clifford's Theorem for such structures.

In order to mention the main steps of the proof we give a proof of a part of Clifford's Theorem for the classical case of curves that will be the prototype for the proof for metric graphs.
So assume C is a smooth curve of genus $g$ possessing a $g^r_{2r}$ with $2\leq r\leq g-2$. We are going to prove that $C$ has a $g^{r-1}_{2r-2}$.
Take $D_1\in C^{(r)}$ general and take $E_1\in C^{(r)}$ such that $D_1+E_1\in g^r_{2r}$.
From the Riemann-Roch Theorem we obtain $\mid K_C-g^r_{2r}\mid = g^{g-1-r}_{2g-2-2r}$.
Take $D_2\in C^{(g-1-r)}$ general and take $E_2\in C^{(g-1-r)}$ such that $D_2+E_2\in g^{g-1-r}_{2g-2r-2}$.
Let $D=D_1+D_2$ and let $E=E_1+E_2$, then $D\in C^{(g-1)}$ is general, hence $\dim \mid D\mid = \dim \mid K_C-D \mid = 0$, and $D+E$ is canonical, hence $\mid K_C-D \mid = \{ E \}$.
Since $\dim \mid D_2 \mid =0$ one has $\dim \mid g^r_{2r}+E_2 \mid = r$ hence $E_2$ is the fixed divisor for $\mid g^r_{2r} + E_2 \mid$, hence for $P\in D_1$ it is contained in the fixed divisor of $\mid \left( g^r_{2r} -P \right) + E_2 \mid $.

Let $P\in D_1$ and let $P'\in D_2$. Using $D'_1=D_1-P+P'\in C^{(r)}$ there exists $E'_1 \in C^{(r)}$ with $D'_1+E'_1 \in g^r_{2r}$.
Similarly, using $D'_2=D_2-P'+P\in C^{(g-1-r)}$ there exists $E'_2\in C^{(g-1-r)}$ such that $D'_2+E'_2\in \mid K_C-g^r_{2r}\mid$.
Hence $D'_1+D'_2+E'_1+E'_2$ is canonical but $D'_1+D'_2=D$ hence $E'_1+E'_2=E$.
Assume $E'_2=E_2$ then $E'_1=E_1$. Let $D'=D_1\cap D_2$ then $\deg (D')=r-1$, $D_1=D'+P$, $D'_1=D'+P'$ hence $D'+P+E_1\in g^r_{2r}$ and $D'+P'+E_1\in g^r_{2r}$. It follows $P=P'$ but this is not possible because $D_1$ and $D_2$ are general divisors.
However $D'+E=(D_1-P)+E_1+E_2\in g^r_{2r}(-P)+E_2$ and $D'+E=(D'_1-P')+E'_1+E'_2\in g^r_{2r}(-P')+E'_2$, hence $\mid g^r_{2r}(-P)+E_2 \mid = \mid g^r_ {2r}(-P')+E'_2 \mid$.
Since $E_2$ is contained in the fixed divisor of $\mid g^r_{2r}(-P)+E_2 \mid$ and $E_2 \neq E'_2$ the linear system $g^r_{2r}(-P)$ has a fixed point.
This proves $C$ has a $g^{r-1}_{2r-2}$.

The advantage of this proof for adaptation to the case of metric graphs is that it makes no explicit use of linear algebra as in the proof given in e.g. \cite{ref1}*{Chapter III}.
In the case of metric graphs it is not clear how to adapt those linear algebra techniques in a suitable way. A remaining difficulty is the use of fixed divisors in the above given proof. In the case of curves, if $F$ is an effective divisor such that $\dim \mid D+F \mid = \dim \mid D \mid$ then for all $E \in \mid D+F \mid$ one has $E \geq F$. For metric graphs this is not true any more.

In the first part of the proof, for a metric graph $\Gamma$ we describe a situation that is very similar to the situation of curves. In that situation we obtain corresponding effective divisors $D$ and $E$ of degree $g-1$ together with decompositions $D=D_1+D_2$ and $E=E_1+E_2$ using $g^r_{2r}$ and $ \mid K_{\Gamma}-g^r_{2r} \mid$ such that the linear systems associated to those divisors do not contain other divisors. This description and the rest of the proof makes intensive use of the Abel-Jacobi maps as defined in \cite{ref4}.
In the second part of the proof, within this situation we can repeat the first steps of the proof for the case of curves. Then  in order to avoid the use of the fixed divisor of a linear system, we start making more complicated arguments.
From those aguments, using a rank determining set described in \cite{ref6}, one obtains the remaining part of the Clifford's Theorem. So clearly the proof is not a direct corollary of using the Riemann-Roch Theorem.

The case of tropical curves is an immediate corrolary of the case of metric graphs.
A tropical curve $C$ can be defined as being a metric graph $\Gamma$ together with some so-called unbounded edges (see e.g. \cite{ref3}). It is well-known that there is a bijective correspondence between linear systems on $C$ and on $\Gamma$.
Also the case of finite graphs follows from the case of metric graphs.
Associated to a finite graph $G$ there is a metric graph $\Gamma$ using an interval of length one to realize each edge.
A divisor $D$ on $G$ can also be considered as a divisor on $\Gamma$ and it is proved in \cite{ref7} that the ranks of $D$ on $G$ and on $\Gamma$ are equal.
Hence if $G$ has a $g^r_{2r}$ then $\Gamma$ has a $g^r_{2r}$. In that case our theorem implies $\Gamma$ has a $g^1_2$.
Metric graphs having a $g^1_2$ can be described by means of a harmonic morphism of degree two from $\Gamma$ to a tree $T$ (with one exception, in which case $G$ is trivially hyperelliptic) (see \cite{ref8}).
Using this description one can prove the existence of a divisor $D$ in $G$ belonging to $g^1_2$ as a divisor on $\Gamma$.
Then again from \cite{ref7} one obtains $D$ defines a $g^1_2$ on $G$.

To summarise, in this paper we prove the following theorem.

\begin{theorem}
Let $\Gamma$ be a metric graph of genus $g\geq 4$ and let $r$ be an integer satisfying $2\leq r\leq g-2$ such that $\Gamma$ has a linear system $g^r_{2r}$ then $\Gamma$ has a linear system $g^1_2$.
\end{theorem}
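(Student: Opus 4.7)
My plan mirrors the classical proof sketched in the introduction, proceeding by induction on $r$: starting from a $g^r_{2r}$ with $2\leq r\leq g-2$, I will produce a $g^{r-1}_{2(r-1)}$ and iterate until $r=1$. The two main adaptations are (i) replacing the phrase ``take a general divisor'' by an appropriate notion of genericity in the tropical Jacobian via the Abel-Jacobi maps of \cite{ref4}, and (ii) replacing the classical fixed-divisor argument, which fails on metric graphs, by rank comparisons on a rank-determining set in the sense of \cite{ref6}.

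The first step is to mimic the setup of the curve proof. By the Riemann--Roch theorem for metric graphs (\cite{ref3},\cite{ref4}), the residual $|K_\Gamma-g^r_{2r}|$ is a linear system of rank $g-1-r$ and degree $2g-2-2r$. Using the Abel-Jacobi maps $\Gamma^{(r)}\to\mathrm{Pic}^r(\Gamma)$ and $\Gamma^{(g-1-r)}\to\mathrm{Pic}^{g-1-r}(\Gamma)$, I will choose $D_1\in\Gamma^{(r)}$ and $D_2\in\Gamma^{(g-1-r)}$ off suitable exceptional loci so that there exist unique $E_1,E_2$ with $D_1+E_1\in g^r_{2r}$ and $D_2+E_2\in|K_\Gamma-g^r_{2r}|$, while $D:=D_1+D_2$ is generic enough in $\mathrm{Pic}^{g-1}(\Gamma)$ that $|D|=\{D\}$ and $|K_\Gamma-D|=\{E\}$ with $E=E_1+E_2$. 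Making all these genericity properties hold simultaneously is the first technical hurdle; it amounts to a careful analysis of the image of Abel-Jacobi on the symmetric products and of the fibers of $g^r_{2r}$ and $|K_\Gamma-g^r_{2r}|$ over them.

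The second step is the swap. Given $P\in\Supp(D_1)$ and $P'\in\Supp(D_2)$, I put $D'_1=D_1-P+P'$ and $D'_2=D_2-P'+P$, and pick $E'_1,E'_2$ with $D'_1+E'_1\in g^r_{2r}$ and $D'_2+E'_2\in|K_\Gamma-g^r_{2r}|$. Since $D'_1+D'_2=D$ and the unique effective canonical residual to $D$ is $E$, I obtain $E'_1+E'_2=E$, and the same direct argument used in the curve proof rules out $E'_2=E_2$. Consequently $D_1-P+E_1+E_2$ lies simultaneously in $|g^r_{2r}(-P)+E_2|$ and $|g^r_{2r}(-P')+E'_2|$, so these two linear systems coincide.

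The principal obstacle is the last step: extracting from this coincidence the existence of a base point of $g^r_{2r}(-P)$, which is what produces the desired $g^{r-1}_{2(r-1)}$. On a curve this is immediate from the notion of fixed part, but on a metric graph the implication $\dim|D+F|=\dim|D|\Rightarrow F\leq E$ for every $E\in|D+F|$ can fail, and the whole argument must be repackaged. I plan to circumvent this by fixing a rank-determining set $A\subset\Gamma$ as in \cite{ref6} and comparing the $A$-reduced representatives of the two coinciding systems; the discrepancy $E_2\neq E'_2$ should then force, for generic $P\in\Supp(D_1)$, a common base point of $g^r_{2r}(-P)$, giving the required drop in degree and closing the induction.
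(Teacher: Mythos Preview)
Your setup through the swap argument is fine and indeed parallels both the curve proof and the paper's Section~3. The gap is in the final step. You write that comparing $A$-reduced representatives of the two coinciding systems ``should then force, for generic $P\in\Supp(D_1)$, a common base point of $g^r_{2r}(-P)$.'' This is exactly the step the paper flags as the obstruction: on a metric graph the equality $|g^r_{2r}(-P)+E_2|=|g^r_{2r}(-P')+E'_2|$ together with $E_2\neq E'_2$ does \emph{not} yield a base point of $g^r_{2r}(-P)$, because the implication ``$\dim|D+F|=\dim|D|\Rightarrow F$ lies in the base locus'' fails. Passing to reduced divisors at points of a rank-determining set does not rescue this; a $P$-reduced representative of $|g^r_{2r}(-P)+E_2|$ need not contain $E_2$, so no contradiction or base point emerges. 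Your proposal does not supply a mechanism to replace this missing implication, and the induction never gets started.

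The paper abandons the inductive scheme entirely. Instead of seeking a base point, it aims directly at a $g^1_2$ by exhibiting equivalences $P_i+Q_i\sim P_j+Q_j$ for all pairs in a rank-determining set $\{Q_1,\dots,Q_{g+1}\}$ built from edges outside spanning trees. The replacement for the fixed-divisor argument is a local dimension count in the affine cell structure of $\Gamma^{(d)}$: the Abel-Jacobi map restricted to a top-dimensional cell is an injective affine map, and the correspondences $D_i\leftrightarrow E_i$ are realised as local affine isomorphisms $\mu_i$ between open pieces of cells. One then swaps a point of $E_1$ with a point of $E_2$ (rather than of $D_1$ with $D_2$), and shows that the preimage $(\mu_1)^{-1}(P_1+\mathcal V'_1)$ of a hyperplane slice of dimension $r-1$ is contained in $\mathcal D_1+\Gamma^{(r-i+1)}$, forcing $r-1\le r-i+1$, hence $i=2$, which yields $P_1+Q_1\sim P_{r+1}+Q_{r+1}$ directly. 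A further argument pins $P_1$ to $Q_1$ via $\mu_1$ alone, giving the needed consistency across all swaps. None of this uses base loci, and the rank-determining set enters only at the very end to certify that the resulting degree-two system has rank~1.
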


\section{Some generalities}\label{section2}

A \emph{metric graph} $\Gamma$ is a compact connected metric space such that for each point $P$ on $\Gamma$ there exists $n \in \mathbb{Z}$ with $n \geq 1$ such that some neighbourhood of $P$ is isometric to $\{ z \in \mathbb{C} : z=te^{2k\pi i/n} \text{ with } t \in [0,r] \subset \mathbb{R} \text{ and } k\in \mathbb{Z} \}$ for some $r>0$ and with $P$ corresponding to $0\in \mathbb{C}$.
This number $n$, depending on $P$, is called the \emph{valence} $\val (P)$ of $P$.
For a metric graph $\Gamma$ we define the \emph{set of vertices} by
\[
V(\Gamma) = \{ P\in \Gamma : \val (P)\neq 2 \} \text { .}
\]
The \emph{set of edges} $E(\Gamma)$ is the set of connected components of $\Gamma \setminus V(\Gamma)$.
For such a component $e$ (which is open in $\Gamma$) there is a homeomorphism to some finite open interval of $\mathbb{R}$ that is locally isometric.
Fixing such an homeomorphism for each edge $e$ we have an identification $e \subset \mathbb{R}$ as being a finite open interval.
The set $\bar{e} \setminus e$ (here $\bar{e}$ is the closure of $e$ in $\Gamma$) is a subset of $V(\Gamma)$ containing at most two points.
In case it consists of one point the $e$ is called a \emph{loop}.
The \emph{genus} of the graph $\Gamma$ is given by the formula
\[
g(\Gamma)= \mid E(\Gamma) \mid - \mid V(\Gamma) \mid + 1 \text { .}
\]

A \emph{divisor} on a metric graph $\Gamma$ is a finite formal integer linear combination of points on $\Gamma$.
In case $D$ is a divisor and $P$ is a point on $\Gamma$ then we write $D(P)$ to denote the coefficient of $D$ at $P$, hence 
\[
D=\sum_{P \in \Gamma} D(P)P \text { .}
\]
The sum of the coefficients of a divisor $D$ is called the \emph{degree} of the divisor
\[
\deg (D)=\sum _{P \in \Gamma} D(P) \text{ .}
\]
The \emph{support} of $D$ is defined by
\[
\Supp (D)= \{ P \in \Gamma : D(P) \neq 0 \} \text { .}
\]
A divisor $D$ is called \emph{effective} if $D(P)\geq 0$ for all $P\in \Gamma$ and we write $D\geq 0$ in that case.
More generally we write $D_1 \geq D_2$ in case $D_1 - D_2 \geq 0$.
The \emph{canonical divisor} is defined by
\[
K_{\Gamma}=\sum_{P\in \Gamma} (\val (P)-2)P \text { .}
\]

A \emph{principal divisor} on $\Gamma$ is defined by a \emph{rational function} $f$ on $\Gamma$.
This is a continuous function $f:\Gamma \rightarrow \mathbb{R}$ such that for each $e\in E(\Gamma)$ there exists a finite partition $e=e_1 \cup \cdots \cup e_n$ of $e\subset \mathbb{R}$ in subintervals such that $f$ restricted to $e_i$ is affine with integer slope.
For $P \in \Gamma $ we define $\divisor (f)(P)$ as being the sum of all slopes of $f$ on $\Gamma$ in all directions emanating from $P$.
Two divisors $D_1$ and $D_2$ on $\Gamma$ are \emph{linearly equivalent} in case $D_2-D_1$ is a principal divisor and we write $D_1 \sim D_2$ in that case. Linearly equivalent divisors have the same degree.
For a divisor $D$ on $\Gamma$ the \emph{linear system} $\mid D \mid$ is the set of all effective divisors linearly equivalent to $D$.
The \emph{rank} $r(D)$ of a divisor $D$ on $\Gamma$ is -1 in case $\mid D \mid$ is empty. In case $\mid D \mid$ is not empty then it is the maximal integer $r$ such that for each effective divisor $E$ of degree $r$ one has $\mid D-E \mid \neq \emptyset$.
A linear system $\mid D \mid$ with $r(D)=r\geq 0$ and $\deg (D)=d$ is called a linear system $g^r_d$ on $\Gamma$.
There is a Riemann-Roch Theorem
\[
r(D)=\deg (D) - g(\Gamma) +1 +r(K_{\Gamma}-D) \text { .}
\]

A very important tool in the study of divisors on metric graphs is the use of reduced divisors.
We are going to use the following characterisation coming from \cite{ref6}*{Lemma 2.4}.
Let $D$ be a divisor on $\Gamma$ and let $P\in \Gamma$. For each $S\subset \Supp (D) \setminus \{ P \}$ let $\mathcal{U}_{S,P}$ be the connected component of $\Gamma \setminus S$ containing P and let $\mathcal{U}^c_{S,P}$ be the complement $\Gamma \setminus \mathcal{U}_{S,P}$.
The divisor $D$ is \emph{$P$-reduced} if for each $S \subset \Supp (D) \setminus \{ P \}$ the closed set $\mathcal{U}^c_{S,P}$ has a non-saturated boundary point $Q$ with respect to $D$.
A boundary point $Q$ is called \emph{saturated} if $D(Q)$ is at least equal to the outgoing degree of $\mathcal{U}^c_{S,P}$ at $Q$.
Here the \emph{outgoing degree} is the number of segments leaving $\mathcal{U}^c_{S,P}$ at $Q$.
It is well-known that for each divisor $E$ on $\Gamma$ and for each point $P$ on $\Gamma$ there is a unique $P$-reduced divisor $D$ on $\Gamma$ linearly equivalent to $E$.
In case $\mid E \mid\neq \emptyset$ then this divisor $D$ is effective (hence belongs to $\mid E\mid$) and for each $D' \in \mid E \mid$ one has $D(P) \geq D(P')$. This gives rise to the following easy but useful lemma.
\begin{lemma}\label{lemma1}
Let $D$ be an effective divisor on $\Gamma$ and assume for all $P \in \Gamma$ the divisor $D$ is $P$-reduced. Then $\mid D \mid = \{ D \}$.
\end{lemma}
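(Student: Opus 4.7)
The plan is to use directly the two properties of $P$-reduced divisors stated immediately before the lemma: uniqueness of the $P$-reduced representative in a linear equivalence class, and the maximality property $D_P(P) \geq D'(P)$ for every $D' \in |E|$, where $D_P$ is the $P$-reduced representative.

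So I would argue as follows. Let $D' \in |D|$ be arbitrary; I want to show $D' = D$. Pick any point $P \in \Gamma$. By hypothesis $D$ is $P$-reduced, and since $D$ is effective, it is the unique $P$-reduced divisor in $|D|$. The maximality property then gives $D(P) \geq D'(P)$. Since this inequality holds for every $P \in \Gamma$, we conclude $D \geq D'$, i.e.\ $D - D' \geq 0$.

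To finish, note that $D \sim D'$ implies $\deg(D) = \deg(D')$, so $D - D'$ is an effective divisor of degree $0$, and hence $D - D' = 0$. Therefore $D' = D$, proving $|D| = \{D\}$.

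There is no real obstacle here: the lemma is a direct repackaging of the maximality property of $P$-reduced divisors, applied simultaneously at every point of $\Gamma$. The only thing to be careful about is invoking both halves of the cited property (uniqueness and pointwise maximality), and observing that an effective divisor of degree zero must be the zero divisor.
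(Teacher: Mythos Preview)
Your proof is correct and follows essentially the same approach as the paper: both use the maximality property of the $P$-reduced representative at every point to conclude $D'(P)\leq D(P)$ for all $P$, then finish by a degree count. The paper's proof splits into the cases $P\notin\Supp(D)$ and $P\in\Supp(D)$, but this is only cosmetic; your single-line argument is a slightly cleaner version of the same idea.
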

\begin{proof}
For $P \notin \Supp (D)$, since $D$ is $P$-reduced there is no $D'\in \mid D \mid$ with $D'(P) \geq 1$. Hence if $D' \in \mid D \mid$ then $\Supp (D') \subset \Supp (D)$. But for $P \in \Supp (D)$ also $D'(P) \leq D(P)$. This proves $D'=D$.
\end{proof}
\begin{lemma}\label{lemma2}
Let $D$ be an effective divisor on $\Gamma$ and assume there exists $P\in \Gamma$ such that $D$ is not $P$-reduced. There exists $D' \in \mid D \mid$ and $Q \in V(\Gamma)$ such that $Q \in \Supp (D')$.
\end{lemma}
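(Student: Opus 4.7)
The plan is to produce $D'\in \mid D\mid$ whose support meets $V(\Gamma)$ by an explicit rational-function move. If $\Supp(D)\cap V(\Gamma)\neq\emptyset$ already, take $D'=D$; so assume throughout that $\Supp(D)\cap V(\Gamma)=\emptyset$.

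Failure of $P$-reducedness supplies a set $S\subset\Supp(D)\setminus\{P\}$ for which every boundary point $Q$ of $\mathcal{U}^c_{S,P}$ satisfies $D(Q)\geq n(Q)$, where $n(Q)$ denotes the outgoing degree of $\mathcal{U}^c_{S,P}$ at $Q$. Consider $f_\epsilon(x)=-\min(d(x,\mathcal{U}^c_{S,P}),\epsilon)$, a rational function on $\Gamma$ that vanishes on $\mathcal{U}^c_{S,P}$ and equals $-\epsilon$ far inside $\mathcal{U}_{S,P}$. A direct slope computation gives $\divisor(f_\epsilon)(Q)=-n(Q)$ at every boundary point $Q$ (absorbed by the saturation inequality), $\divisor(f_\epsilon)=0$ at every valence-$2$ point at distance $\neq\epsilon$ and at every non-boundary point of $\mathcal{U}^c_{S,P}$, and strictly positive contributions precisely at those points at metric distance exactly $\epsilon$ from $\mathcal{U}^c_{S,P}$.

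The key step is the choice of $\epsilon$: take $\epsilon$ to be the smallest distance from $\mathcal{U}^c_{S,P}$ to a vertex inside $\mathcal{U}_{S,P}$. When $V(\Gamma)\cap\mathcal{U}_{S,P}\neq\emptyset$ this $\epsilon$ is positive and finite, and the choice ensures that no vertex of $\mathcal{U}_{S,P}$ lies strictly inside the active band $d(\,\cdot\,,\mathcal{U}^c_{S,P})<\epsilon$, so no unwanted negative contribution occurs. Therefore $D':=D+\divisor(f_\epsilon)\in \mid D\mid$ is effective and contains the distance-realizing vertex in its support.

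The main obstacle is the residual case $V(\Gamma)\cap\mathcal{U}_{S,P}=\emptyset$. Then $\mathcal{U}_{S,P}$ is necessarily an open sub-interval of the interior of a single edge $e$, with both boundary points $a,b\in S$ interior to $e$ and both vertex-endpoints $v_1,v_2$ of $e$ lying in $\mathcal{U}^c_{S,P}$. Here one flips the construction, using $g_\epsilon(x)=-\min(d(x,\mathcal{U}_{S,P}),\epsilon)$ to push chips from $\{a,b\}$ outward. Because $\val(a)=\val(b)=2$, the saturation hypothesis $D(a),D(b)\geq 1$ exactly compensates the single chip that $\divisor(g_\epsilon)$ removes from each of $a,b$; choosing $\epsilon=\min(d(a,v_1),d(b,v_2))$ then deposits a chip at the nearer vertex-endpoint of $e$, again yielding the required $D'$.
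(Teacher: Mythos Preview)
Your proof is correct and follows essentially the same approach as the paper's: split into the two cases according to whether $\mathcal{U}_{S,P}$ contains a vertex, and in each case fire chips using the distance function to the appropriate closed set, with the firing distance chosen so that the wavefront first meets a vertex. The paper presents these two moves pictorially (its Figures 1 and 2) while you write out the rational functions $f_\epsilon$ and $g_\epsilon$ explicitly, but the underlying construction is the same.
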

\begin{proof}
Of course we can assume that for all $Q \in V(\Gamma)$ one has $Q \notin \Supp (D)$.
Since $D$ is not $P$-reduced there exists $S \subset \Supp(D) \setminus \{ P \}$ such that each boundary point of $\mathcal{U}^c_{S,P}$ is saturated.
\begin{figure}[h]
\begin{center}
\includegraphics[height=2 cm]{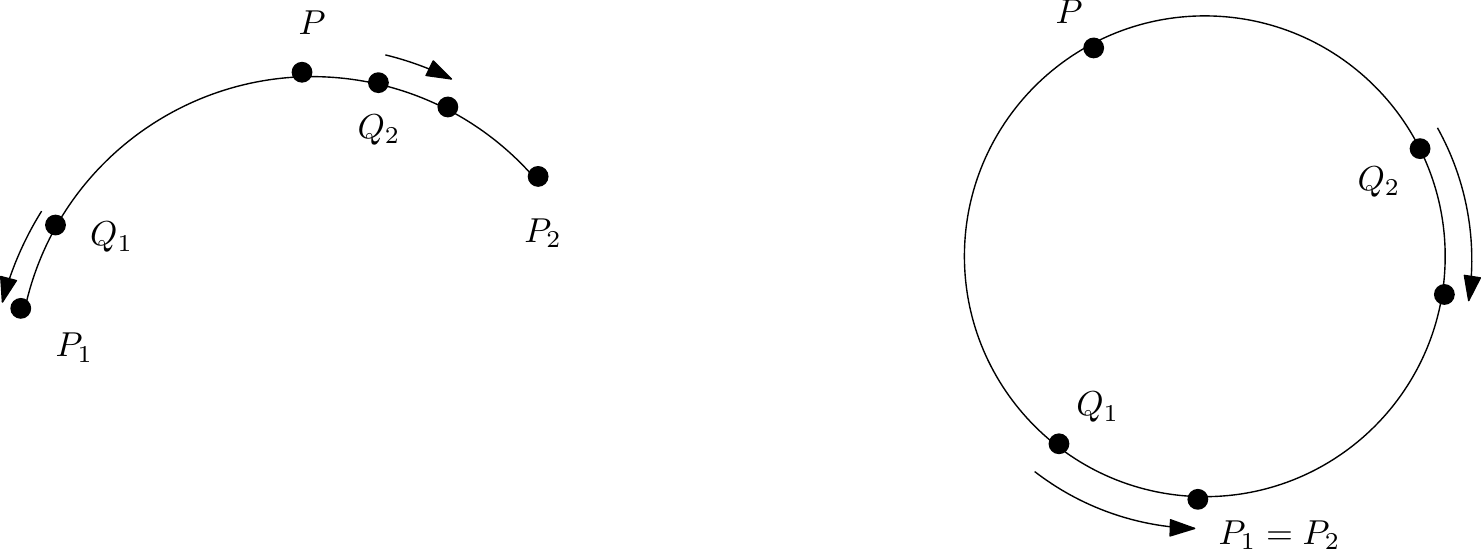}
\caption{$\mathcal{U}_{S,P}$ contains no vertex }\label{Figuur 1}
\end{center}
\end{figure}

First assume $\mathcal{U}_{S,P}$ does not contain a point of $V(\Gamma)$. There exists $e \in E(\Gamma)$ such that $\mathcal{U}_{S,P} \subset e$.
The boundary points of $\mathcal{U}^c_{S,P}$ (contained in $S$, hence not contained in $V(\Gamma)$) are inside $e$.
Let $\bar{e}\setminus e=\{ P_1, P_2 \}$ ($P_1=P_2$ is possible).
Since $P\in \mathcal{U}_{S,P}$ there is a point of $S$ on both connected components of $e \setminus \{ P \}$.
Let $Q_1$ and $Q_2$ be the points on those components most close to $P_1$ and $P_2$ (of course $Q_1 \neq Q_2$).
Then $Q_1+Q_2$ is linearly equivalent to an effective divisor of degree 2 with support contained in $\bar{e}$ and having at least one of the points $P_1$ and $P_2$ in its support (see Figure \ref{Figuur 1}).
\begin{figure}[h]
\begin{center}
\includegraphics[height=2 cm]{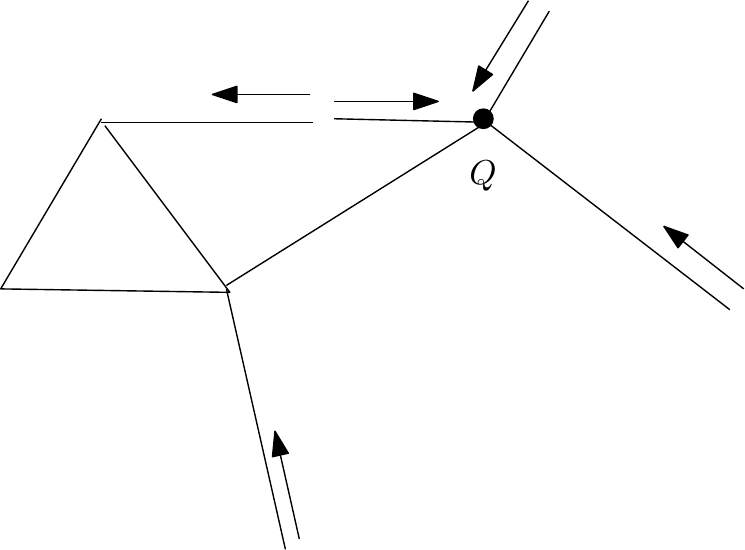}
\caption{$\mathcal{U}_{S,P}$ contains a vertex }\label{Figuur 2}
\end{center}
\end{figure}

Next, assume $\mathcal{U}_{S,P} \cap V(\Gamma)\neq \emptyset$ and let $Q$ be point in $V(\Gamma)$ most close to a boundary point of $\mathcal{U}^c_{S,P}$.
Then there is a subdivisor $D'$ of $D$ (with support equal to $S$) such that $D'$ is linearly equivalent to an effective divisor $D''$ with support contained in $\mathcal{U}_{S,P}$ and containing $Q$ in its support (see Figure \ref{Figuur 2}).

\end{proof}

\section{First part of the proof}\label{section3}

In this part of the proof we are going to describe the situation in which we can perform the first steps of the proof given for curves. In order to obtain this situation we are going to use the Abel-Jacobi maps desribed in \cite{ref4}. First we recall some details on the structure of those Abel-Jacobi maps.

In \cite{ref4}*{6.1} one defines the \emph{tropical Jacobian} $J(\Gamma)$ of a metric graph $\Gamma$. The authors do introduce one-forms on $\Gamma$ and $\Omega (\Gamma)$ is the \emph{space of global one-forms} on $\Gamma$. It is a $g$-dimensional $\mathbb{R}$-vectorspace. For $e \in E(\Gamma)$ identified with an open interval $e \subset \mathbb{R}$ the restriction of $\omega \in \Omega (\Gamma)$ to $e$ corresponds to a constant one-form on that interval. In this way, for a path $\gamma$ in $\Gamma$ the integral $\int_{\gamma}\omega$ is well defined and in this way $\gamma$ defines a linear function on $\Omega (\Gamma)$.
Using cycles on $\Gamma$ this gives rise to a lattice $\Lambda \subset \Omega (\Gamma)^* \cong \mathbb{R}^g$.
Then $J(\Gamma)=\Omega (\Gamma)^*/ \Lambda$.
Fixing a base point $P_0$ in $\Gamma$ one obtains for $d \in \mathbb{Z}$ with $d \geq 1$ an \emph{Abel-Jacobi map} $I(d):\Gamma ^{(d)} \rightarrow J(\Gamma)$.
Here $\Gamma ^{(d)} = \Gamma ^d/ S_d$ is the quotient of $\Gamma ^d$ under the action of the symmetric group using the quotient topology.
This map is defined as follows in \cite{ref4}*{6.2}. For $D=P_1 + \cdots + P_d \in \Gamma ^{(d)}$ choose paths $\gamma _i$ ($1 \leq i \leq d$) in $\Gamma$ from $P_0$ to $P_i$ and consider the element of $\Omega (\Gamma)^*$ defined by $\omega \rightarrow \sum _{i=1}^d \left( \int _{\gamma _i} \omega \right)$.
The image in $J(\Gamma)$ is well-defined (independent from the choices of the paths); this is $I(d)(D)$.
It is proved in \cite{ref4}*{Theorem 6.1} that for $E_1, E_2 \in \Gamma ^{(d)}$ one has that $I(d)(E_1)=I(d)(E_2)$ if and only if $E_1 \sim E_2$. Hence the fibres of $I(d)$ are the linear systems of degree $d$ of $\Gamma$.

On $\Gamma ^{(d)}$ there exists a natural integer affine structure defined and described in \cite{ref9}*{2.1}. This is based on a stratification of $\Gamma ^{(d)}$ into so-called cells of $\Gamma$.
Assume $D \in \Gamma ^{(d)}$. In case $\Supp (D)\subseteq  V(\Gamma)$ then the cell containing $D$ is just $\{ D \}$. Assume $\Supp (D) \nsubseteq V(\Gamma)$ and let $d'$ be the number of points in $\Supp (D) \setminus V(\Gamma)$ and let $e_1, \cdots , e_{d'}$ be the edges containing a particular point of $\Supp (D) \setminus V(\Gamma)$ (those edges need not be mutually different).
Using the identification of $e_i$ with an open interval $e_i\subset \mathbb{R}$, then the cell $\mathfrak{e}$ containing $D$ is identified with an open subset of $\prod_{i=1}^{d'}(e_i) \subset \mathbb{R}^{d'}$ (it is equal to $\prod_{i=1}^{d'}(e_i)$ in case all edges are mutually different, see \cite{ref9} for the other situations).
In particular $\mathfrak{e}$ is identified with an open subset of $\mathbb{R}^{d'}$ and we write $\dim (\mathfrak{e})=d'$.
Clearly, taking $P\in \Supp (D) \setminus V(\Gamma)$ then the subset $\mathfrak{e}_P= \{ D' \in \mathfrak{e} : P \in D' \}$ is an affine hyperplane section of $\mathfrak{e} \subset \mathbb{R}^{d'}$ through $D$.

Let $\mathfrak{e}$ be a cell of $\Gamma ^{(d)}$ of dimension $d'$ and consider the identification $\mathfrak{e} \subset \mathbb{R}^{d'}$.
From the description of $I(d)$ using integrals we obtain that there is an affine map $i_{\mathfrak{e}} : \mathbb{R}^{d'} \rightarrow \mathbb{R}^g$ such that the restriction of $I(d)$ to $\mathfrak{e}$ is equal to the composition of the restriction of the affine map to $\mathfrak{e}$  with the natural quotient map $\mathbb{R}^g \rightarrow J(\Gamma)$.
In case the rank of $i_{\mathfrak{e}}$ is equal to $f \leq d'$ then $i_{\mathfrak{e}}(\mathfrak{e})$ is an open subset of some affine subspace of $\mathbb{R}^g$ of dimension $f$.
In this case we write $\dim (I(d))(\mathfrak{e})=f$ and we have $\dim (I(d)(\mathfrak{e})) \leq \dim (\mathfrak{e})$.

We write $\Gamma _0^{(d)} \subset \Gamma ^{(d)}$ to denote the subset of the divisors $D$ containing some vertex in their support. It is a finite union of cells of $\Gamma ^{(d)}$ of dimension less then $d$. In particular $\dim (I(d) (\Gamma_0^{(d)}))<d$.

From now on we assume $\Gamma$ is a metric graph of genus $g$ such that there is some integer $2 \leq r \leq g-2$ and a linear system $g^r_{2r}$ on $\Gamma$.
Let $T$ be a spanning tree for $\Gamma$ and let $e_1, \cdots, e_g$ be the edges of $\Gamma$ not contained in $T$.
Take $Q_i \in e_i$ for $1 \leq i \leq g$. In case $\Gamma$ is the union of $T$ and $g$ loops then $\Gamma$ has a $g^1_2$.
Indeed, let $P_i$ be the unique point of $\bar{e_i} \setminus e_i$ (hence $P_i \in T$) and consider the divisor $D=2P_1$.
Let $Q\in T$ then using the tree one can make a rational function $f$ on $\Gamma$ (constant on the loops) such that $\divisor (f)=D-2Q$.
In particular $2P_i \in \mid D \mid$ for $1 \leq i \leq g$. Using the loop $e_i$ one can make a rational function $f$ on $\Gamma$ (constant on $\Gamma \setminus e_i$) showing that for  each $Q \in e_i$ there exist $Q' \in e_i$ such that $Q+Q' \in \mid D \mid$ (see Figure \ref{Figuur 3}).
\begin{figure}[h]
\begin{center}
\includegraphics[height=3 cm]{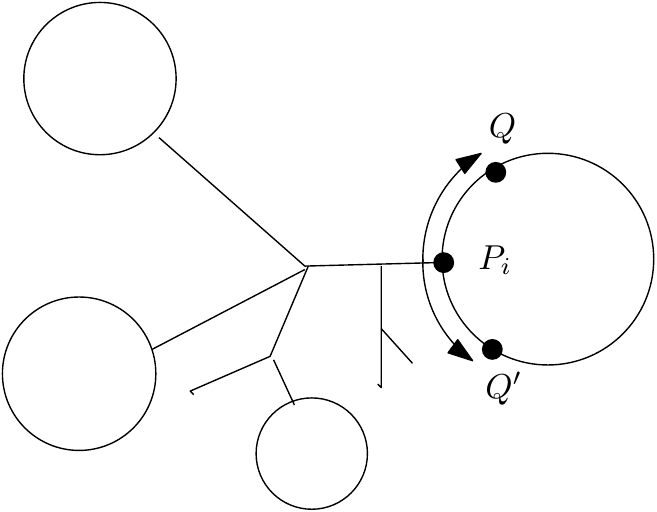}
\caption{Tree with loops }\label{Figuur 3}
\end{center}
\end{figure}
So we can assume $e_g$ is not a loop of $\Gamma$. Then inside $T \cup e_g$ there is another spanning tree $T'$ of $\Gamma$ and an edge $e_{g+1} \notin \{e_1, \cdots , e_g \}$ such that $T' \cup e_{g+1} = T \cup e_g$.
Take $Q_{g+1} \in e_{g+1}$, this is a point of $T$. From \cite{ref6}*{Proposition 3.28} it follows $\{ Q_1, \cdots Q_{g+1} \}$ is a rank-determining set for $\Gamma$.
Therefore it is enough to prove that for each $1 \leq i \leq g+1$ there exists $P_i \in \Gamma$ such that for $1 \leq i < j \leq g+1$ one has $P_i+Q_i \sim P_j+Q_j$.
In the rest of this first part, in order to describe the desired situation, we only use $e_1, \cdots , e_{g-1}$.

Let $D=Q_1 + \cdots Q_{g-1} \in \Gamma ^{(g-1)}$.
For each $P\in \Gamma$ and $S \subseteq \Supp (D) \setminus \{ P \}$ by construction the set $\mathcal{U} _{S,P}$ is equal to $\Gamma \setminus S$, hence $\mathcal{U}^c_{S,P}=S$ and for $Q \in S$ the outgoing degree of $S$ at $Q$ is equal to 2 while $D(Q)=1$.
This implies $D$ is $P$-reduced for all $P\in \Gamma$ and therefore $\mid D \mid = \{ D \}$ (see Lemma \ref{lemma1}). The divisor $D$ belongs to a cell $\mathfrak{e}$ of $\Gamma ^{(g-1)}$ identified with $\prod_{i=1}^{g-1} e_i \subset \mathbb{R}^{g-1}$ of dimension $g-1$ and the restriction $I(g-1) : \mathfrak{e} \rightarrow J(\Gamma)$ is injective.
Hence the affine map $i_{\mathfrak{e}}$ has rank $g-1$ and $\dim \left( I(g-1)(\mathfrak{e}) \right)=g-1$.
In order to make notations not to involved, in such case we are also going to write $\mathfrak{e}$ to denote its image in $J(\Gamma)$.
Let $D_1=Q_1 + \cdots + Q_r$ and $D_2=Q_{r+1} + \cdots + Q_{g-1}$. Also $\mid D_1 \mid = \{ D_1 \}$ and $\mid D_2 \mid = \{ D_2 \}$.
The cell of $\Gamma ^{(r)}$ (resp. $\Gamma ^{(g-1-r)}$) containing $D_1$ (resp. $D_2$) are denoted by $\mathfrak{e}_1$ (resp. $\mathfrak{e}_2$) and the restrictions $I(r): \mathfrak{e}_1 \rightarrow J(\Gamma)$ (resp. $I(g-1-r):\mathfrak{e}_2 \rightarrow J(\Gamma)$) are injective and have images of dimensions $r$ (resp. $g-r-1$), also denoted by $\mathfrak{e}_1$ (resp. $\mathfrak{e}_2$).

Let $k=I(2g-2)(K_{\Gamma})$ and for $F \in g^r_{2r}$ let $h=I(2r)(F)$.
On $J(\Gamma)$ we consider the translated $k- \mathfrak{e}$. This is equal to the projection on $J(\Gamma)$ of an open subset (that we are also going to denote by $k-\mathfrak{e}$) of some affine hyperplane of $\mathbb{R}^g$ (of course it is a translation of $i_{\mathfrak{e}}(\mathfrak{e})$).
For a cell $\mathfrak{c}$ of $\Gamma ^{(g-1)}$, using suited paths on $\Gamma$ to define $i_{\mathfrak{c}}$, the intersection of $I(g-1)(\mathfrak{c})$ and $k-\mathfrak{e}$ is equal to the image of the intersection of $i_{\mathfrak{c}}(\mathfrak{c})$ and $k-\mathfrak{e}$ in $\mathbb{R}^g$.

For $D \in \mathfrak{e}$ one has $r(D)=0$ hence from the Riemann-Roch Theorem it follows $r(K_{\Gamma} - D)=0$.
Hence there exists $E \in \Gamma^{(g-1)}$ such that $I(g-1)(E)=k-I(g-1)(D)$, hence $k-\mathfrak{e} \subset I(g-1)(\Gamma ^{(g-1)})$.
Let $\mathfrak{c}$ be the cell of $\Gamma ^{(g-1)}$ containing $E$.
 Remember that $i_{\mathfrak{c}}(\mathfrak{c})$ is an open subset of some affine subspace $L_{\mathfrak{c}}$ of $\mathbb{R}^g$ of dimension $\dim \left( I(g-1) (\mathfrak{c}) \right)$, hence the intersection of  $i_{\mathfrak{c}}(\mathfrak{c})$ and $k-\mathfrak{e}$  in $\mathbb{R}^g$  is contained in the image in $J(\Gamma)$ of the intersection of $L_{\mathfrak{c}}$ and $k-\mathfrak{e}$.
In case $\mathfrak{c}$ is a cell contained in $\Gamma _0^{(g-1)}$ then $\dim (L_{\mathfrak{c}}) \leq g-2$. Therefore there is an open dense subset of $k-\mathfrak{e} \subset J(\Gamma)$ that is not contained in $ I(g-1) (\Gamma _0^{(g-1)})$.
From now on we assume $Q_1, \cdots, Q_{g-1}$ are chosen such that $k-I(g-1)(D) \notin I(g-1)(\Gamma _0^{(g-1)})$.

Let $E \in \Gamma ^{(g-1)}$ such that $I(g-1)(E) = k-I(g-1)(D)$, hence $D+E \in \mid K _{\Gamma} \mid $.
From Lemma \ref{lemma2} it follows that if there exists some point $P \in \Gamma$ such that $E$ is not $P$-reduced then $E$ is linearly equivalent to some $E' \in \Gamma _0^{(g-1)}$.
Because of our choice of $D$ this is not possible, hence it follows $E$ is $P$-reduced for all $P \in \Gamma$. From Lemma \ref{lemma1} this implies $\mid E \mid = \{ E \} $.
Write $E = P_1 + \cdots P_{g-1}$ and for $1 \leq i \leq g-1$ write $f_i \in E(\Gamma)$ with $P_i \in f_i$. For $1 \leq i < j \leq g-1$ one has $f_i \neq f_j$ otherwise one has $P_i+P_j \sim P'_i + P'_j$ for some other points on $f_i$, contradicting $ \mid E \mid = \{ E \}$.
Let $\mathfrak{f}$ be the cell of $\Gamma ^{(g-1)}$ containing $E$.
We obtained the restriction $I(g-1):\mathfrak{f} \rightarrow J(\Gamma)$ is injective, hence as in the case of $\mathfrak{e}$, one finds that this restriction comes from an affine map $i_{\mathfrak{f}}$ of rank $g-1$ and $\dim \left( I(g-1)(\mathfrak{f}) \right) = g-1$. Again we also write $\mathfrak{f}$ to denote $I(g-1)(\mathfrak{f})$.
The intersection of $\mathfrak{f}$ and $k-\mathfrak{e}$ on $J(\Gamma)$ can be considered as the projection on $J(\Gamma)$ of the intersection of $i_{\mathfrak{f}}(\mathfrak{f})$ and $k-\mathfrak{e}$ inside $\mathbb{R}^g$. This is contained inside the intersection of $k-\mathfrak{e}$ and the affine hyperplane $L_{\mathfrak{f}}$ of $\mathbb{R}^g$ containing $i_{\mathfrak{f}}(\mathfrak{f})$.
So we obtain that a dense open subset of $k-\mathfrak{e}$ in $\mathbb{R}^g$ is covered by finitely many such intersections. Hence there exist such $\mathfrak{f}$ such that $k-\mathfrak{e} \subset L_{\mathfrak{f}}$. There is an open dense subset of $\mathfrak{e}$ such that for $D$ in that open subset and $E \in \mid K_{\Gamma} -D \mid$ this situation occurs.
From now on we assume $Q_1, \cdots , Q_{g-1}$ are chosen such that $D$ belongs to that subset of $\mathfrak{e}$.
For that divisor $D$ and the corresponding divisor $E \in \mathfrak{f}$ there exist neighbourhoods $\mathcal{U}$ (resp $\mathcal{V}$) of $D$ (resp. $E$) in $\mathfrak{e}$ (resp. $\mathfrak{f}$) and an affine isomorphism $\mathbb{R}^{g-1} \rightarrow \mathbb{R}^{g-1}$ such that the restriction to $\mathfrak{e} \subset \mathbb{R}^{g-1}$ induces a bijection $\mu : \mathcal{U} \rightarrow \mathcal{V}$ such that for $D' \in \mathcal{U}$ and $\mu (D') = E'$ one has $D'+E' \in \mid K_{\Gamma} \mid$ (and as we already know $\mid D' \mid = \{ D' \}$ and $\mid E' \mid = \{ E' \}$).

By definition there exists $E_1 \in \Gamma ^{(r)}$ such that $D_1+E_1 \in g^r_{2r}$, hence $I(r)(E_1)=h-I(r)(D_1)\in h-\mathfrak{e}_1$.
Also by the Riemann-Roch Theorem $\mid K_{\Gamma}-g^r_{2r} \mid = g^{g-r-1}_{2g-2r-2}$ hence there exists $E_2 \in \Gamma ^{(g-r-1)}$ such that $D_2+E_2 \in g^{g-r-1}_{2g-2r-2}$, hence $I(g-r-1)(E_2)=k-h-I(g-r-1)(D_2)\in k-h-\mathfrak{e}_2$.
But then $D+E_1+E_2 \in \mid K_{\Gamma} \mid$, hence $E_1+E_2=E$.
Assume the numbering of the points is such that $P_1 + \cdots P_r = E_1$ and $P_{r+1} + \cdots + P_{g-1} = E_2$.
Let $\mathfrak{f}_1$ (resp. $\mathfrak{f}_2$)  be the cell of $\Gamma ^{(r)}$ (resp. $\Gamma ^{(g-r-1)}$) containing $E_1$ (resp. $E_2$).
Repeating the arguments for obtaining the bijection $\mu$ (and if necessary changing the choice of $Q_1, \cdots, Q_{g-1}$ such that $D$ belongs to a suited dense open subset of $\mathfrak{e}$), we obtain for $i=1,2$ the existence of neighbourhoods $\mathcal{U}_i$ (resp. $\mathcal{V}_i$) of $D_i$ (resp $E_i$) in $\mathfrak{e}_i$ (resp. $\mathfrak{f}_i$) and affine isomorphisms $\mathbb{R}^r \rightarrow \mathbb{R}^r$ in case $i=1$ and $\mathbb{R}^{g-r-1} \rightarrow \mathbb{R}^{g-r-1}$ in case $i=2$ inducing bijections $\mu _i : \mathcal{U}_i \rightarrow \mathcal{V}_i$ such that for $D'_i \in \mathcal{U}_i$ and $E'_i=\mu _i(D'_i)$ one has $D'_1+E'_1 \in g^r_{2r}$ or $D'_2+E'_2 \in \mid K_{\Gamma}-g^r_{2r} \mid$.
In particular $D'_1+D'_2+E'_1+E'_2 \in \mid K_{\Gamma} \mid$ hence $E'_1+E'_2=\mu (D'_1+D'_2)$.
Hence by restriction we can assume $\mathcal{U}=\mathcal{U}_1 \times \mathcal{U}_2$, $\mathcal{V}=\mathcal{V}_1\times \mathcal{V}_2$ and $\mu = \mu _1 \times \mu _2$.

\section{Second part  of the proof}\label{section4}

We continue to use $D=D_1+D_2$ and $E=E_1+E_2$ as obtained in Section \ref{section3}.
We are going to prove that there is some suited numbering of the points for $E_1$ and for $E_2$ such that for all $1\leq i < j \leq g-1$ one has $P_i+Q_i \sim P_j+Q_j$.
Let $E''_1=E_1-P_1$, $E'_1=E''_1+P_{r+1}$ (now we are starting to use $r\leq g-2$), $E''_2=E_2-P_{r+1}$ and $E'_2=E''_2+P_1$.
Hence $E'_1+E'_2=E$ is another decomposition of $E$.
There exists $D'_1 \in \Gamma ^{(r)}$ such that $E'_1+D'_1 \in g^r_{2r}$ and there exists $D'_2 \in \Gamma ^{(g-r-1)}$ such that $E'_2+D'_2 \in \mid K_{\Gamma}-g^r_{2r} \mid$.
It follows $E+D'_1+D'_2 \in \mid K_{\Gamma} \mid$ hence $D'_1+D'_2=D$ is also a decomposition of $D$.
One has
\[
E''_1+D=(E_1+D_1-P_1)+D_2 \in \mid g^r_{2r}(-P_1) \mid + D_2
\]
and
\[
E''_1+D=(E'_1+D'_1-P_{r+1})+D'_2 \in \mid g^r_{2r}(-P_{r+1}) \mid + D'_2 \text{ .}
\]
From now on we have to use different arguments than in the proof for curves given in the introduction. Let $1 \leq i \leq r+1$ be such that $\deg (D_1 \cap D'_1)=r-i+1$. We assume the numbering of the points $Q_i$ such that $D''_1=D_1 \cap D'_1=Q_i + \cdots Q_r$ (of course in case $i$ would be $r+1$ then $D''_1=0$).
In case $i=1$ we have $D_1=D'_1$ hence also $D_2=D'_2$ and we obtain $\mid g^r_{2r}(-P_1) \mid = \mid g^r_{2r}(-P_{r+1}) \mid $ hence $P_1 \sim P_{r+1}$. Since $P_1 \neq P_{r+1}$ and $\mid E \mid = \{ E \}$ this is not possible, hence $i \geq 2$.

We are going to prove that $i=2$ hence $D_1=D''_1+Q_1$ and $D'_1=D''_1+Q$ for some $Q \in D_2$ (we can assume the numbering is such that $Q=Q_{r+1}$) and we find
\[
E_1+D_1=E''_1+D''_1+P_1+Q_1 \in g^r_{2r}
\]
and
\[
E'_1+D'_1=E''_1+D''_1+P_{r+1}+Q_{r+1} \in g^r_{2r}
\]
hence $P_1+Q_1 \sim P_{r+1}+Q_{r+1}$.
Replacing $P_{r+1}$ by $P_{r+2}$ (in case $r<g-2$) (resp. $P_1$ by $P_2$) we obtain a similar conclusion but it is important that we use the same $Q_1$ (resp. $Q_{r+1}$) in that conclusion.
From the argument that $i=2$ we are going to obtain that, starting with $Q_1$ (resp. $Q_{r+1}$) there is a characterisation of $P_1$ (resp. $P_{r+1}$)  only using the bijection $\mu_1: \mathcal{U}_1 \rightarrow \mathcal{V}_1$ (resp. $\mu_2 : \mathcal{U}_2 \rightarrow \mathcal{V}_2$).
This characterisation settles that problem.

So we already know that $i \geq 2$. Write $D_1=D''_1+\mathcal{D}_1$ and $D'_1=D''_1+\mathcal{D}'_1$, hence $\mathcal{D}_1 \leq D'_2$ and $\mathcal{D}'_1\leq D_2$ with $\mathcal{D}_1 , \mathcal{D}'_1 \in \Gamma ^{(i-1)}$.
Since 
\[
E''_1+D-\mathcal{D}_1 \in \mid g^r_{2r}(-P_{r+1}) \mid + (D'_2-\mathcal{D}_1)
\]
with $D'_2-\mathcal{D}_1\geq 0$ it follows
\[
r(E''_1+D-\mathcal{D}_1)=r(E''_1+D''_1+D_2)\geq r-1 \text { .}
\]
The divisor $E''_1$ belongs to the cell $\mathfrak{f}'_1$ of $\Gamma ^{(r-1)}$ identified with $\prod_{i=2}^r f_i \subset \mathbb{R}^{r-1}$.
Summation gives rise to $P_1 + \mathfrak{f}'_1\subset \mathfrak{f}_1$, it is the intersection of $\mathfrak{f}_1 \subset \mathbb{R}^r$ with an affine hyperplane.
Let $\mathcal{V}'_1$ be a neighbourhood of $E''_1$ in $\mathfrak{f}'_1$ such that $P_1+\mathcal{V}'_1 \subset \mathcal{V}_1$.
Then we obtain $(\mu_1)^{-1}(P_1+\mathcal{V}'_1)$ is an open subset of the intersection of $\mathfrak{e}_1 \subset \mathbb{R}^r$ with an affine hyperplane.
Let $F''_1 \in \mathcal{V}'_1$ and let $G_1=(\mu _1)^{-1}(P_1+F''_1)$, hence $P_1+F''_1+G_1 \in g^r_{2r}$ and $P_1+F''_1+E_2+G_1+D_2 \in \mid K_{\Gamma} \mid$.
Since $P_1+F''_1+E_2 \in \mathcal{V}$ it follows $\mu (P_1+F''_1+E_2)=G_1+D_2$.

There exists $G_2 \in \Gamma ^{(g-i)}$ such that
\[
F''_1+G_2 \in \mid E''_1+D''_1+D_2 \mid = \mid K_{\Gamma} -P_1-E_2-\mathcal{D}_1 \mid
\]
hence
\[
F''_1+G_2+P_1+E_2+\mathcal{D}_1 \in \mid K_{\Gamma} \mid
\]
hence $\mu (P_1+F''_1+G_2)=G_2+\mathcal{D}_1$.
This implies $G_2+\mathcal{D}_1 = G_1+D_2$.
Since $\mathcal{D}_1 \cap D_2 = \emptyset$ it follows $D_2 \leq G_2$ hence $G_2=D_2+G'_2$ for some $G'_2 \in \Gamma ^{(r-i+1)}$.
This implies $G_1=G'_2+\mathcal{D}_1$. So we obtain
\[
(\mu _1)^{-1}(P_1+\mathcal{V}'_1) \subseteq \mathcal{D}_1 + \Gamma ^{(r-i+1)} \text { .}
\]
Since $\mu _1$ is a bijection and $\dim (P_1+\mathcal {V}'_1)=r-1$ it follows $r-1 \leq r-i+1$ hence $i \leq 2$ and therefore $i=2$.

In this case of equality the argument also implies
\[
(\mu _1)^{-1}(P_1+\mathcal{V}'_1) \subseteq Q_1 + \Gamma ^{(r-1)}
\]
Since $E_1 \in P_1+\mathcal{V}'_1$ it follows $D_1=Q_1+D''_1 \in (\mu _1)^{-1} (P_1+\mathcal{V}'_1)$.
The divisor $D''_1$ belongs to the cell $\mathfrak{e}'_1$ of $\Gamma ^{(r-1)}$ identified with $\prod_{i=2}^r e_i \subset \mathbb{R}^{r-1}$.
Since $(\mu _1)^{-1}(P_1+\mathcal{V}'_1)$  is an open subset of an affine hyperplane section of $\mathfrak{e}_1 \subset \mathbb{R}^r$  and  $Q_1+\mathfrak{e}'_1$  is an affine hyperplane section of $\mathfrak{e}_1 \subset \mathbb{R}^r$, because of the inclusion it is the same hyperplane section.
Therefore there exists a neighbourhood $\mathcal{U}'_1$ of $D''_1$ in $\mathfrak{e}'_1$ such that $\mu _1$ induces a bijection between $P_1+\mathcal{V}'_1$ and $Q_1+\mathcal{U}'_1$.
This implies $P_1$ can be characterised by the following property.
It is the unique point $P$ on $\Gamma$ such that for all $D'$ contained in a sufficiently small neighbourhood of $D_1$ in $\mathfrak{e}_1$ with $Q_1 \in D'$ the divisor $E' = \mu _1 (D')$ contains $P$.
This implies that, taking any $P_j$ (with $r+2 \leq j \leq g-1$) instead of $P_{r+1}$ (in case $r<g-2$), there exist $Q \in D_2$ such that $P_1+Q_1 \sim P_j+Q$ (because the point $P_1$ is characterised by $\mu _1$ and $Q_1$).
Also taking any $P_i$ (with $2 \leq i\leq g-1$) instead of $P_1$ there exist $Q \in D_1$ and $Q' \in D_2$ such that $P_i+Q \sim P_{r+1} + Q'$.
We need to conclude that $Q'=Q_{r+1}$. In case $r=g-2$ this trivially holds, so we assume $r<g-2$.

The divisor $E''_2$ belongs to the cell $\mathfrak{f}'_2$ of $\Gamma ^{(g-r-2)}$ identified with $\prod_{i=r+2}^g f_i \subset \mathbb{R}^{g-r-2}$ and by summation we have $P_{r+1}+\mathfrak{f}'_2 \subseteq \mathfrak{f}_2$.
Let $\mathcal{V}'_2$ be a neighbourhood of $E''_2$ in $\mathfrak{f}'_2$ such that $P_{r+1}+\mathcal{V}'_2 \subseteq \mathcal{V}_2$.
Let $F_2 \in \mathcal{V}'_2$ and let $G_2=(\mu _2)^{-1}(P_{r+1}+F_2)$, hence $P_{r+1}+F_2+G_2 \in \mid K_{\Gamma} - g^r_{2r} \mid$.
Using $D_1+G_2$ instead of $D_1+D_2$ we obtain there exists $Q'_{r+1} \in G_2$ such that $P_1+Q_1 \sim P_{r+1}+Q'_{r+1}$ (here we use again that $P_1$ is characterized by $\mu _1$ and $Q_1$).
Hence also $P_{r+1} + Q_{r+1} \sim P_{r+1} + Q'_{r+1}$ hence $Q_{r+1} \sim Q'_{r+1}$. Since $\mid D \mid = \{ D \}$ it follows $Q_{r+1}=Q'_{r+1}$.
This implies $G_2 \in Q_{r+1} + \Gamma ^{(g-r-2)}$ hence
\[
(\mu _2)^{-1}(P_{r+1}+\mathcal{V}'_2) \subset Q_{r+1} + \Gamma ^{(g-r-1)} \text { .}
\]
One has $(\mu _2)^{-1}(P_{r+1}+E''_2)=(\mu _2)^{-1}(E_2)=D_2=Q_{r+1}+D''_2$ with $D''_2=D_2-Q_{r+1}$. Let $\mathfrak{e}'_2$ be the cell of $\Gamma ^{(g-2-r)}$ containing $D''_2$. Again $Q_{r+1} + \mathfrak{e}'_2$  is an affine hyperplane section in $\mathfrak{e}_2 \subset \mathbb{R}^{g-r-1}$.  One concludes as before that there exists a neighbourhood $\mathcal{U}'_2$ of $D''_2$ in $\mathfrak{e}'_2$ such that 
\[
\mu _2 (Q_{r+1} + \mathcal {U}'_2)=P_{r+1}+\mathcal{V}'_2 \text { .}
\]
We obtain the following characterisation for $P_{r+1}$ using $\mu _2$ and $Q_{r+1}$. 
It is the point $P \in \Gamma$ such that for all $D''$ in a sufficiently small neighbourhood of $D_2$ in $\mathfrak{e}_2$ containing $Q_{r+1}$ the divisor $E''=\mu _2(D'')$ contains $P$.
From this characterisation it follows that for each $2 \leq i \leq r$ there exists $Q \in D_1$ such that $P_i+Q \sim P_{r+1}+Q_{r+1}$.
In case $Q_1=Q$ it would follow that $P_1+Q_1 \sim P_i+Q_1$ hence $P_1 \sim P_i$. Since $\mid E \mid = \{ E \}$ this is impossible.
In case $r<g-2$ taking $r+2 \leq j \leq g-1$ and taking $P_j$ instead of $P_{r+1}$ we obtain $Q \in D_2$ such that $P_1+Q_1 \sim P_j+Q$.
If $Q=Q_{r+1}$ we obtain $P_{r+1} + Q_{r+1} \sim P_j + Q_{r+1}$ hence $P_{r+1} \sim P_j$, again a contradiction.
This proves we can use a numbering $E=P_1 + \cdots + P_{g-1}$ such that for $1 \leq i < j \leq g-1$ one has $P_i+Q_i \sim P_j+Q_j$.

Now taking $Q_1, \cdots , Q_{g-2}, Q_g$ (resp. $Q_1, \cdots , Q_{g-2}, Q_{g+1}$) instead of $Q_1, \cdots , Q_{g-1}$ we still use the same bijection $\mu _1$ (once again we use $r \leq g-2$) and we find $P_g \in \Gamma$ (resp. $P_{g+1} \in \Gamma$) such that $P_1+Q_1 \sim P_g+Q_g$ (resp. $P_1+Q_1 \sim P_{g+1}+Q_{g+1}$).
Since $P_1, \cdots , P_{g+1}$ is a rank-determining set this proves $r(P_1+Q_1)= 1$, hence $\Gamma$ has a linear system $g^1_2$.

\begin{bibsection}
\begin{biblist}

\bib{ref9}{article}{
	author={Amini, O.},
	title={Reduced divisors and embeddings of tropical curves},
	journal={Trans. AMS},
}
\bib{ref1}{book}{
    author={Arbarello, E.},
    author={Cornalba, M.},
    author={Griffiths, P.},
    author={Harris, J.},
    title={Geometry of algebraic curves, Vol I},
    series={Grundlehren},
    volume={267},
    year={1985},
}
\bib{ref2}{article}{
	author={Baker, M.},
	author={Norine, S.},
	title={Riemann-Roch and Abel-Jacobi theory on a finite graph},
	journal={Advances in Mathematics},
	volume={215},
	year={2007},
	pages={766-788},
}
\bib{ref8}{article}{
	author={Chan, M.},
	title={Tropical hyperelliptic graphs},
	journal={J. Algebr. Comb.},
}
\bib{ref5}{article}{
	author={Facchini, L.},
	title={On tropical Clifford's theorem},
	journal={Ricerche Mat.},
	volume={59},
	year={2010},
	pages={343-349},
}
\bib{ref3}{article}{
	author={Gathmann},
	author={Kerber},
	title={A Riemann-Roch theorem in tropical geometry},
	journal={Math. Z.},
	volume={289},
	year={2007},
	pages={217-230},
}
\bib{ref7}{article}{
	author={Hladk\'{y}, J.},
	author={Kr\'{a}l, D.},
	author={Norine, S.},
	title={Rank of divisors on tropical curves},
}
\bib{ref6}{article}{
	author={Luo, Y.},
	title={Rank-determining sets of metric graphs},
	journal={Journal of Comb. Theory Series A},
	volume={118},
	year={2011},
	pages={1775-1793},
}
\bib{ref4}{article}{
	author={Mikhalkin, G.},
	author={Zharkov, I.},
	title={''Tropical curves, their Jacobians and theta functions'' in \emph{Curves and Abelian varieties}},
	journal={Contemp. Math.},
	volume={465},
	year={2008},
	pages={203-231},	
}

\end{biblist}
\end{bibsection}

\end{document}